\theoremstyle{plain}
\newtheorem{thm}{Theorem}[section]
\newtheorem{lem}[thm]{Lemma}
\newtheorem{cor}[thm]{Corollary}
\theoremstyle{definition}
\newtheorem{dfn}[thm]{Definition}
\newtheorem{exmp}[thm]{Example}
\newtheorem{dfns-rems}[thm]{Definitions and Remarks}
\newtheorem{notas-rems}[thm]{Notations and Remarks}
\newtheorem{exmps-rems}[thm]{Examples and Remarks}
\begin{document}


\title[Regularity of symbolic powers of cover ideals]{Regularity of symbolic powers of cover ideals of graphs}


\author[S. A. Seyed Fakhari]{S. A. Seyed Fakhari}

\address{S. A. Seyed Fakhari, School of Mathematics, Statistics and Computer Science,
College of Science, University of Tehran, Tehran, Iran.}

\email{aminfakhari@ut.ac.ir}


\begin{abstract}
Let $G$ be a graph which belongs to either of the following classes: (i) bipartite graphs, (ii) unmixed graphs, or (iii) claw--free graphs.  Assume that $J(G)$ is the cover ideal $G$ and $J(G)^{(k)}$ is its $k$-th symbolic power. We prove that$$k{\rm deg}(J(G))\leq {\rm reg}(J(G)^{(k)})\leq (k-1){\rm deg}(J(G))+|V(G)|-1.$$We also determine families of graphs for which the above inequalities are equality.
\end{abstract}


\subjclass[2000]{Primary: 13D02, 05E99}


\keywords{Cover ideal, Regularity, Symbolic power}


\thanks{}


\maketitle


\section{Introduction} \label{sec1}

Let $I$ be a homogeneous ideal in the polynomial ring $S = \mathbb{K}[x_1, \ldots, x_n]$. Suppose that the minimal free resolution of $I$ is given by
$$0\rightarrow \cdots \rightarrow \bigoplus_j S(-j)^{\beta _{1,j}(I)}\rightarrow \bigoplus_j S(-j)^{\beta _{0,j}(I)}\rightarrow I\rightarrow 0.$$
The Castelnuovo-Mumford regularity (or simply, regularity) of $I$, denoted by ${\rm reg}(I)$, is defined as
$${\rm reg}(I)={\rm max}\{j-i\mid \beta _{i,j}(I)\neq 0\},$$
and is an important invariant in commutative algebra and algebraic geometry.

Computing and finding bounds for the regularity of powers of a monomial ideal have been studied by a number of researchers (see for example \cite{ab}, \cite{abs}, \cite{b''}, \cite{bht}, \cite{c'}, \cite{cht}, \cite{htt}, \cite{jns} \cite{msy}). This work is motivated by a recent paper of Hang and Trung \cite{ht1}. In that paper, the authors study the regularity of powers of cover ideals of the so-called unimodular hypergraphs. It is well-known that the class of unimodular hypergraphs includes the family of bipartite graphs. Restricting to this family of graphs, their Theorem 3.3 (see also \cite[Corollary 3.4]{ht1}) says that if $G$ is a bipartite graph with cover ideal $J(G)$ (see Definition \ref{cover}), then there is a non-negative integer $e\leq |V(G)|-{\rm deg}(J(G))-1$ such that$${\rm reg}(J(G)^k)=k{\rm deg}(J(G))+e,$$for every integer $k\geq |V(G)|+2$. Here, ${\rm deg}(J(G))$ denotes the maximum degree of the minimal monomial generators of $J(G)$. Consequently, for every integer $k\geq |V(G)|+2$, we have $${\rm reg}(J(G)^k)\leq (k-1){\rm deg}(J(G))+|V(G)|-1.$$It is natural to ask wether the above inequality is valid for every non-negative integer $k$. In Theorem \ref{bip}, we give a positive answer to this question. Note that by \cite[Corollary 2.6]{grv}, the ordinary and the symbolic powers of the cover ideal of a bipartite graph are the same. Therefore, it is reasonable to study the regularity of symbolic powers of cover ideals. This will be done in Section \ref{sec3}. The most general result of this paper is Theorem \ref{reg}. Its statement is as follows. Let $\mathcal{H}$ be a family of graphs such that (i) for every graph $G\in \mathcal{H}$ and every vertex $x\in V(G)$, the graph $G\setminus N_G[x]$ belongs to $\mathcal{H}$, and (ii) every $G\in \mathcal{H}$ which has no isolated vertex, admits a minimal vertex cover with cardinality at least $\frac{|V(G)|}{2}$. We show in Theorem \ref{reg} that for every graph $G\in \mathcal{H}$ and every integer $k\geq 1$,
\[
\begin{array}{rl}
k{\rm deg}(J(G))\leq {\rm reg}(J(G)^{(k)})\leq (k-1){\rm deg}(J(G))+|V(G)|-1.
\end{array} \tag{1} \label{1}
\]
It is easy to see that the class of bipartite graphs satisfies the assumption of Theorem \ref{reg}, which implies the above mentioned result (see Theorem \ref{bip}). It follows from \cite{gv} that the class of unmixed graphs also satisfies the assumption of Theorem \ref{reg}. Hence, the inequalities (\ref{1}) are true for every unmixed graph too (see Theorem \ref{unmixed}). In Theorem \ref{claw}, we show that the class of claw--free graphs also satisfies the assumption of Theorem \ref{reg}. This means that the inequalities (\ref{1}) are true for claw--free graphs too. In Corollaries \ref{eq1} and \ref{eq2}, we determine the families of graphs for which the inequalities (\ref{1}) are equality, showing that theses inequalities are sharp.


\section{Preliminaries} \label{sec2}

In this section, we provide the definitions and basic facts which will be used in the next section. We refer the reader to \cite{hh} for undefined terminologies.

Let $G$ be a simple graph with vertex set $V(G)=\big\{x_1, \ldots,
x_n\big\}$ and edge set $E(G)$ (by abusing the notation, we identify the vertices of $G$ with the variables of $S$). For a vertex $x_i$, the {\it neighbor set} of $x_i$ is $N_G(x_i)=\big\{x_j\mid \{x_i, x_j\}\in E(G)\big\}$ and we set $N_G[x_i]=N_G(x_i)\cup \{x_i\}$ and call it the {\it closed neighborhood} of $x_i$. The {\it degree} of $x_i$, denoted by ${\rm deg}_G(x_i)$ is the cardinality of $N_G(x_i)$. A vertex of degree one is called a {\it leaf}. An edge $e\in E(G)$ is a {\it pendant edge}, if it is incident to a leaf. For every subset $A\subset V(G)$, the graph $G\setminus A$ is the graph with vertex set $V(G\setminus A)=V(G)\setminus A$ and edge set $E(G\setminus A)=\{e\in E(G)\mid e\cap A=\emptyset\}$. A subset $W$ of $V(G)$ is said to be an {\it independent subset} of $G$ if there are no edges among the vertices of $W$. The cardinality of the largest independent subset of $G$ is the {\it independence number} of $G$ and is denoted by $i(G)$.
A subset $C$ of $V(G)$ is called a {\it vertex cover} of $G$ if every edge of $G$ is incident to at least one vertex of $C$. A vertex cover $C$ is called a {\it minimal vertex cover} of $G$ if no proper subset of $C$ is a vertex cover of $G$. Note that $C$ is a minimal
vertex cover if and only if $V(G)\setminus C$ is a maximal independent set. The graph $G$ is called {\it unmixed} if all
minimal vertex covers of $G$ have the same cardinality. The graph $G$ is said to be {\it complete} if each pair of vertices of $G$ are adjacent by an edge. The graph $G$ is {\it bipartite} if there exists a partition $V(G)=A\cup B$ such
that each edge of $G$ is of the form $\{x_i,x_j\}$ with $x_i\in A$ and $v_j\in B$. If moreover, every vertex of $A$ is adjacent to every vertex of $B$, then we say that $G$ is a {\it complete bipartite} graph and denote it by $K_{a,b}$, where $a=|A|$ and $b=|B|$. The graph $K_{1,3}$ is called a {\it claw} and the graph $G$ is said to be {\it claw--free} if it has no claw as an induced subgraph.

We now define the main objective of this paper.

\begin{dfn} \label{cover}
Let $G$ be a graph with $n$ vertices. The {\it cover ideal} of $G$, denoted by $J(G)$ is a squarefree monomial ideal of $S$ which is defined as follows.$$J(G)=\big(\prod_{x_i\in C}x_i \mid C \ {\rm is \ a \ minimal \ vertex \ cover \ of} \ G\big)$$It is well-known that$$J(G)=\bigcap_{\{x_i, x_j\}\in E(G)}(x_i, x_j).$$In other words, $J(G)$ is the Alexander dual of the so-called edge ideal of $G$ (see \cite[Section 9.1.1]{hh} for more details).
\end{dfn}

Assume that $I$ is an ideal of $S$ and ${\rm Min}(I)$ is the set of minimal primes of $I$. For every integer $k\geq 1$, the $k$-th {\it symbolic power} of $I$,
denoted by $I^{(k)}$, is defined to be$$I^{(k)}=\bigcap_{\frak{p}\in {\rm Min}(I)} {\rm Ker}(R\rightarrow (R/I^k)_{\frak{p}}).$$Let $I$ be a squarefree monomial ideal with irredundant
primary decomposition $$I=\frak{p}_1\cap\ldots\cap\frak{p}_r,$$ where every
$\frak{p}_i$ is a prime ideal generated by a subset of the variables. It follows from \cite[Proposition 1.4.4]{hh} that for every integer $k\geq 1$, $$I^{(k)}=\frak{p}_1^k\cap\ldots\cap
\frak{p}_r^k.$$In particular, for every graph $G$, we have $$J(G)^{(k)}=\bigcap_{\{x_i, x_j\}\in E(G)}(x_i, x_j)^k,$$for every integer $k\geq 1$.

For a monomial ideal $I$, we denote the set of its minimal monomial generators by $G(I)$. The {\it degree} of $I$, denoted by ${\rm deg}(I)$ is the maximum degree of elements of $G(I)$. Thus, in particular, ${\rm deg}(J(G))$ is the cardinality of the largest minimal vertex cover of the graph $G$.


\section{Main Results} \label{sec3}

In this section, we prove the main results of this paper. Namely, we show in Theorem \ref{reg} that for certain classes of graphs, including bipartite graphs, unmixed graphs and claw--free graphs, the inequalities \ref{1} hold. The first inequality is indeed true if one replaces $J(G)$ by any arbitrary squarefree monomial ideal. The proof of this assertion is simple and it follows immediately from the following lemma.

\begin{lem} \label{deg}
Let $I$ be a squarefree monomial ideal of $S$. For every integer $k\geq 1$, we have ${\rm deg}(I^{(k)})\geq k{\rm deg}(I)$.
\end{lem}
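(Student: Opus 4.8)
The plan is to exhibit a single minimal monomial generator of $I^{(k)}$ whose degree is at least $k\,{\rm deg}(I)$; since ${\rm deg}(I^{(k)})$ is by definition the largest degree occurring among the minimal generators of $I^{(k)}$, this suffices. The natural candidate is the $k$-th power of a top-degree generator of $I$ itself. So first I would fix a minimal monomial generator $u\in G(I)$ with ${\rm deg}(u)={\rm deg}(I)$, write $u=\prod_{i\in A}x_i$ where $A\subseteq\{1,\ldots,n\}$ and $|A|={\rm deg}(I)$ (using that $I$ is squarefree, so $u$ is squarefree), and consider the monomial $u^{k}=\prod_{i\in A}x_i^{k}$, which has degree $k\,{\rm deg}(I)$.

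Fix the irredundant primary decomposition $I=\frak{p}_1\cap\cdots\cap\frak{p}_r$ with each $\frak{p}_\ell$ generated by a subset of the variables, so that $I^{(k)}=\frak{p}_1^{k}\cap\cdots\cap\frak{p}_r^{k}$ as recalled above. Membership $u^{k}\in I^{(k)}$ is immediate: since $u\in I\subseteq\frak{p}_\ell$, some variable of $\frak{p}_\ell$ divides $u$, hence its $k$-th power divides $u^{k}$ and $u^{k}\in\frak{p}_\ell^{k}$ for every $\ell$. The substance of the proof is therefore to show that $u^{k}$ is in fact a \emph{minimal} generator of $I^{(k)}$.

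The key combinatorial observation I would use is that minimality of $u$ in $I$ supplies, for each variable in its support, a witnessing component of the decomposition. Concretely, for each $i\in A$ the monomial $u/x_i$ is not in $I$, so there is an index $\ell(i)$ with $u/x_i\notin\frak{p}_{\ell(i)}$; since $u\in\frak{p}_{\ell(i)}$ and $u$ is squarefree with support $A$, this forces $x_i$ to be the \emph{unique} variable of $\frak{p}_{\ell(i)}$ dividing $u$. Now suppose a monomial $v$ properly divides $u^{k}$ and lies in $I^{(k)}$; then ${\rm supp}(v)\subseteq A$ and $v=\prod_{i\in A}x_i^{a_i}$ with $0\le a_i\le k$ and $a_i<k$ for at least one $i$. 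For each $i$, the only variable of $\frak{p}_{\ell(i)}$ dividing $v$ is $x_i$, so the total $\frak{p}_{\ell(i)}$-degree of $v$ equals $a_i$; membership $v\in\frak{p}_{\ell(i)}^{k}$ then forces $a_i\ge k$, i.e. $a_i=k$. As this holds for every $i\in A$, we get $v=u^{k}$, contradicting that the divisor is proper. Hence $u^{k}$ is a minimal generator of $I^{(k)}$, and ${\rm deg}(I^{(k)})\ge{\rm deg}(u^{k})=k\,{\rm deg}(I)$.

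I expect the only delicate point to be the witness step in the third paragraph: extracting, from the mere fact that $u/x_i\notin I$, a single component $\frak{p}_{\ell(i)}$ in which $x_i$ is the sole variable meeting ${\rm supp}(u)$. Everything else—the membership $u^{k}\in I^{(k)}$ and the degree bookkeeping forcing each $a_i=k$—is routine once this observation is in hand.
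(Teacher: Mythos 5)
Your proposal is correct and follows essentially the same route as the paper: both take a top-degree minimal generator $u$, use the decomposition $I^{(k)}=\frak{p}_1^k\cap\cdots\cap\frak{p}_r^k$, and exploit minimality of $u$ to produce, for each $x_i$ dividing $u$, a witness prime $\frak{p}_{\ell(i)}$ not containing $u/x_i$, thereby showing $u^k$ is a minimal generator of $I^{(k)}$. The only cosmetic difference is that the paper verifies $u^k/x_i\notin\frak{p}_{\ell(i)}^k$ via the primary property of $\frak{p}_{\ell(i)}^k$, while you do the equivalent degree count in the variables of $\frak{p}_{\ell(i)}$.
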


\begin{proof}
Let $I=\frak{p}_1\cap\ldots\cap\frak{p}_r,$ be the irredundant
primary decomposition of $I$. Choose a squarefree monomial $u\in G(I)$ with ${\rm deg}(u)={\rm deg}(I)$. Notice that $u^k\in I^k\subseteq I^{(k)}$. As $u\in G(I)$, for every $1\leq i\leq n$, there exists an integer $1\leq j\leq r$ such that $u/x_i\notin \frak{p}_j$. Thus $(u/x_i)^k\notin \frak{p}_j$. Since $\frak{p}_j^k$ is a $\frak{p}_j$-primary ideal and $x_i^{k-1}\notin \frak{p}_j^k$, we conclude that $u^k/x_i=(u/x_i)^kx_i^{k-1}\notin \frak{p}_j^k$. Consequently, $u^k/x_i\notin I^{(k)}$, for every integer $i$ with $1\leq i\leq n$. Thus, $u^k$ belongs to the set of minimal monomial generators of $I^{(k)}$. Hence,$${\rm deg}(I^{(k)})\geq {\rm deg}(u^k)=k{\rm deg}(I).$$
\end{proof}

We are now ready to prove the first main result of this paper.

\begin{thm} \label{reg}
Let $\mathcal{H}$ be a family of graphs which satisfies the following conditions.
\begin{itemize}
\item[(i)] For every graph $G\in \mathcal{H}$ and every vertex $x\in V(G)$, the graph $G\setminus N_G[x]$ belongs to $\mathcal{H}$.
\item[(ii)] If $G\in \mathcal{H}$ has no isolated vertex, then it admits a minimal vertex cover with cardinality at least $\frac{|V(G)|}{2}$.
\end{itemize}
Then for every graph $G\in \mathcal{H}$ and every integer $k\geq 1$, we have$$k{\rm deg}(J(G))\leq {\rm reg}(J(G)^{(k)})\leq (k-1){\rm deg}(J(G))+|V(G)|-1.$$
\end{thm}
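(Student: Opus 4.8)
The plan is to prove the two inequalities separately, with the lower bound being essentially immediate and the upper bound requiring the real work. For the lower bound $k\,\mathrm{deg}(J(G))\le \mathrm{reg}(J(G)^{(k)})$, I would argue that for any monomial ideal $I$ one has $\mathrm{reg}(I)\ge \mathrm{deg}(I)$, since a minimal generator of maximal degree $d$ forces a nonzero graded Betti number $\beta_{0,d}(I)\ne 0$, giving $\mathrm{reg}(I)\ge d-0 = d$. Combining this with Lemma \ref{deg}, which gives $\mathrm{deg}(J(G)^{(k)})\ge k\,\mathrm{deg}(J(G))$, yields the lower bound directly. This step uses no hypothesis on $\mathcal{H}$ at all, consistent with the remark preceding the theorem that the first inequality holds for arbitrary squarefree monomial ideals.

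The upper bound $\mathrm{reg}(J(G)^{(k)})\le (k-1)\mathrm{deg}(J(G))+|V(G)|-1$ is where conditions (i) and (ii) enter, and my approach would be induction on $|V(G)|$, exploiting the deletion $G\setminus N_G[x]$ so that the inductive hypothesis applies via condition (i). The natural tool is a short exact sequence (or the mapping-cone / Takayama-type splitting) that relates $J(G)^{(k)}$ to the symbolic powers of the cover ideals of smaller graphs obtained by deleting a closed neighborhood. Concretely, I would fix a vertex $x$ of maximal degree (or an appropriately chosen vertex) and split $J(G)^{(k)}$ along $x$: writing the generators that are divisible by high powers of $x$ versus those that are not, one gets an exact sequence whose outer terms involve the cover ideal of $G\setminus N_G[x]$ and colon ideals $(J(G)^{(k)}:x^j)$. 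Using the standard fact that $\mathrm{reg}$ of the middle term is bounded by the regularities of the outer terms (with appropriate degree shifts of the form $+j$), one reduces the estimate to the smaller graph, to which condition (i) lets us apply the induction hypothesis.

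The main obstacle, and the step I expect to demand the most care, is controlling the degree shifts in the exact sequence so that they assemble into exactly $(k-1)\mathrm{deg}(J(G))+|V(G)|-1$ rather than something weaker. This is precisely where condition (ii) should be used: the bound $|V(G)|-1$ is tight only when the deleted neighborhood is large enough, and the existence of a minimal vertex cover of size at least $|V(G)|/2$ controls $\mathrm{deg}(J(G))$ from below (equivalently, it bounds the independence number $i(G)\le |V(G)|/2$), which in turn keeps the accumulated shifts from exceeding the target. I would need to verify that when $x$ is chosen optimally, the quantity $\mathrm{deg}(J(G\setminus N_G[x]))$ together with the shift introduced by deleting $|N_G[x]|$ vertices respects the inductive bookkeeping. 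The base cases (isolated vertices, or a graph with a single edge) should be checked directly, and the isolated-vertex situation is handled by the hypothesis ``no isolated vertex'' in (ii), so I would first reduce to the case where $G$ has no isolated vertices by noting that isolated vertices affect neither $J(G)$ nor the regularity.
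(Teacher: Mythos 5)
Your lower bound argument is correct and is exactly the paper's: combine Lemma \ref{deg} with the observation that a minimal generator of top degree forces $\beta_{0,\mathrm{deg}(I)}(I)\neq 0$. For the upper bound, your strategy (delete a closed neighborhood, use an exact-sequence/colon-ideal regularity bound, induct via condition (i)) points in the right direction, but there is a genuine gap: you never explain how to close the colon-ideal branch, and that is where all the difficulty lies. The restriction branch $J(G)^{(k)}\cap S_1=u_1^kJ(G\setminus N_G[x_1])^{(k)}S_1$ (where $S_1$ is the ring with $x_1$ deleted and $u_1$ is the product of the neighbors of $x_1$) does reduce to a smaller graph and is handled essentially as you suggest. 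But $(J(G)^{(k)}:x_1)$, or your $(J(G)^{(k)}:x^j)$, is \emph{not} the symbolic power of a cover ideal of a smaller graph, so induction on $|V(G)|$ alone cannot terminate on that branch. The paper's mechanism is to iterate the Dao--Huneke--Schweig bound \cite[Lemma 2.10]{dhs} through the entire chain $J_i'=(J(G)^{(k)}:x_1x_2\cdots x_i)$ for $i=1,\dots,n$, bounding each intermediate restriction $J_i'\cap S_{i+1}$ by the smaller-graph estimate, and then invoking the identity $(J(G)^{(k)}:x_1x_2\cdots x_n)=J(G)^{(k-2)}$ from \cite[Lemma 3.4]{s5} to land back on a symbolic power of the \emph{same} graph with exponent dropped by $2$. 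This forces the induction to run on $m+k$ (number of edges plus exponent), not on $|V(G)|$.

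Condition (ii) then enters exactly once, and not quite where you place it: traversing the full chain costs an additive $n-1$ in regularity, while dropping the exponent from $k$ to $k-2$ improves the target bound by $2\,\mathrm{deg}(J(G))$; these balance precisely when $2\,\mathrm{deg}(J(G))\geq n$, which is what a minimal vertex cover of size at least $n/2$ provides. Your intuition that (ii) bounds $\mathrm{deg}(J(G))$ from below is right, but without the closing identity $(J(G)^{(k)}:x_1\cdots x_n)=J(G)^{(k-2)}$ one cannot see why the relevant comparison is between $2\,\mathrm{deg}(J(G))$ and $n$, and your proposed filtration by powers of a single vertex, $(J(G)^{(k)}:x^j)$, admits no analogous closing identity. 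As written, the proposal cannot be completed without supplying this missing idea.
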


\begin{proof}
The first inequality is an immediate consequence of Lemma \ref{deg}. Therefore, we prove the second inequality. Equivalently, we prove that$${\rm reg}(S/J(G)^{(k)})\leq (k-1){\rm deg}(J(G))+n-2,$$where $n=|V(G)|$. By replacing $\mathcal{H}$ with $\mathcal{H}\cup\{K_2\}$, we may assume that $K_2\in \mathcal{H}$. Let $m$ be the number of edges of $G$. We prove the assertions by induction on $m+k$.

By (i), for every graph $G\in \mathcal{H}$, the graph obtained from by $G$ deleting its isolated vertices belongs to $\mathcal{H}$. Thus, we can assume that $G$ has no isolated vertex. The assertion is well-known for $k=1$ (it follows, for example, by looking at the Taylor resolution of $J(G)$). If $m=1$, then $G=K_2$. In this case $J(G)=(x_1,x_2)$. Hence, ${\rm deg}(J(G))=1$ and ${\rm reg}(J(G)^{(k)})=k$. Thus, the desired inequality is true for $m=1$. Therefore, assume that $k,m\geq 2$. Let $S_1=\mathbb{K}[x_2, \ldots, x_n]$ be the polynomial ring obtained from $S$ by deleting the variable $x_1$ and consider the ideals $J_1=J(G)^{(k)}\cap S_1$ and
$J_1'=(J(G)^{(k)}:x_1)$. It follows from \cite[Lemma 2.10]{dhs} that
\[
\begin{array}{rl}
{\rm reg}(S/J(G)^{(k)})\leq \max \{{\rm reg}_{S_1}(S_1/J_1), {\rm reg}_S(S/J_1')+1\},
\end{array} \tag{2} \label{2}
\]

Set $u_1=\prod_{x_j\in N_G(x_1)}x_j\in S_1$. Hence, ${\rm deg}(u_1)={\rm deg}_G(x_1)$ and by \cite[Lemma 2.2]{s4},$$J(G)\cap S_1=u_1J(G\setminus N_G[x_1])S_1.$$It then follows that$$J_1=J(G)^{(k)}\cap S_1=(J(G)\cap S_1)^{(k)}=u_1^kJ(G\setminus N_G[x_1])^{(k)}S_1.$$Notice that if $C$ is a minimal vertex cover of $G\setminus N_G[x_1]$, then $C\cup N_G(x_1)$ is a minimal vertex cover of $G$. This shows that$${\rm deg}(J(G\setminus N_G[x_1]))+{\rm deg}_G(x_1)\leq {\rm deg}(J(G)).$$On the other hand, \cite[Lemma 4.1]{s3} implies that ${\rm reg}(J(G)\cap S_1)\leq {\rm reg}(J(G))$ and therefore,$${\rm reg}_{S_1}(S_1/J(G\setminus N_G[x_1])S_1)\leq {\rm reg}(S/J(G))-{\rm deg}(u_1).$$Since $G\setminus N_G[x_1]\in \mathcal{H}$, the induction hypothesis implies that
\begin{align*}
& {\rm reg}_{S_1}(S_1/J_1)={\rm reg}_{S_1}(S_1/J(G\setminus N_G[x_1])^{(k)}S_1)+k{\rm deg}(u_1)\\
& \leq (k-1){\rm deg}(J(G\setminus N_G[x_1]))+|V(G\setminus N_G[x_1])|-2+k{\rm deg}_G(x_1)\\
& \leq (k-1)({\rm deg}(J(G))-{\rm deg}_G(x_1))+n-{\rm deg}_G(x_1)-1-2+k{\rm deg}_G(x_1)\\
& < (k-1){\rm deg}(J(G))+n-2.
\end{align*}
Thus, using the inequality (\ref{2}), it is enough to prove that$${\rm reg}_S(S/J_1')\leq (k-1){\rm deg}(J(G))+n-3.$$

For every integer $i$ with $2\leq i\leq n$, let $S_i=\mathbb{K}[x_1, \ldots, x_{i-1}, x_{i+1}, \ldots, x_n]$ be the polynomial ring obtained from $S$ by deleting the variable $x_i$ and consider the ideals $J_i'=(J_{i-1}':x_i)$ and $J_i=J_{i-1}'\cap S_i$.

{\bf Claim.} For every integer $i$ with $1\leq i\leq n-1$ we have$${\rm reg}(S/J_i')\leq \max\{(k-1){\rm deg}(J(G))+n-3, {\rm reg}_S(S/J_{i+1}')+1\}.$$

\vspace{0.4cm}
{\it Proof of the Claim.} For every integer $i$ with $1\leq i\leq n-1$, we know from \cite[Lemma 2.10]{dhs} that
\[
\begin{array}{rl}
{\rm reg}(S/J_i')\leq \max \{{\rm reg}_{S_{i+1}}(S_{i+1}/J_{i+1}), {\rm reg}_S(S/J_{i+1}')+1\}.
\end{array} \tag{3} \label{3}
\]

Notice that for every integer $i$ with $1\leq i\leq n-1$, we have $J_i'=(J(G)^{(k)}:x_1x_2\ldots x_i)$. Thus,$$J_{i+1}=J_i'\cap S_{i+1}=((J(G)^{(k)}\cap S_{i+1}):_{S_{i+1}}x_1x_2\ldots x_i).$$Hence, it follows from \cite[Lemma 4.2]{s3} that

\[
\begin{array}{rl}
{\rm reg}_{S_{i+1}}(S_{i+1}/J_{i+1})\leq {\rm reg}_{S_{i+1}}(S_{i+1}/(J(G)^{(k)}\cap S_{i+1})).
\end{array} \tag{4} \label{4}
\]

Set $u_{i+1}=\prod_{x_j\in N_G(x_{i+1})}x_j\in S_{i+1}$. By Lemma \cite[Lemma 2.2]{s4},$$J(G)\cap S_{i+1}=u_{i+1}J(G\setminus N_G[x_{i+1}])S_{i+1}.$$Therefore,$$J(G)^{(k)}\cap S_{i+1}=(J(G)\cap S_{i+1})^{(k)}=u_{i+1}^kJ(G\setminus N_G[x_{i+1}])^{(k)}S_{i+1}.$$Notice that if $C$ is a minimal vertex cover of $G\setminus N_G[x_{i+1}]$, then $C\cup N_G(x_{i+1})$ is a minimal vertex cover of $G$. This shows that$${\rm deg}(J(G\setminus N_G[x_{i+1}]))+{\rm deg}_G(x_{i+1})\leq {\rm deg}(J(G)).$$On the other hand, \cite[Lemma 4.1]{s3} implies that ${\rm reg}(J(G)\cap S_{i+1})\leq {\rm reg}(J(G))$. Therefore,$${\rm reg}_{S_{i+1}}(S_{i+1}/J(G\setminus N_G[x_{i+1}])S_{i+1})\leq {\rm reg}(S/J(G))-{\rm deg}(u_{i+1}).$$Since $G\setminus N_G[x_{i+1}]$ belongs to $\mathcal{H}$, the induction hypothesis implies that
\begin{align*}
& {\rm reg}_{S_{i+1}}(S_{i+1}/(J(G)^{(k)}\cap S_{i+1}))={\rm reg}_{S_{i+1}}(S_{i+1}/J(G\setminus N_G[x_{i+1}])^{(k)}S_{i+1})+k{\rm deg}(u_{i+1})\\
& \leq (k-1){\rm deg}(J(G\setminus N_G[x_{i+1}]))+|V(G\setminus N_G[x_{i+1}])|-2+k{\rm deg}_G(x_{i+1})\\
& \leq (k-1)({\rm deg}(J(G))-{\rm deg}_G(x_{i+1}))+n-{\rm deg}_G(x_{i+1})-1-2+k{\rm deg}_G(x_{i+1})\\
& \leq (k-1){\rm deg}(J(G))+n-3.
\end{align*}
Finally, the claim now follows by inequalities (\ref{3}) and (\ref{4}).

\vspace{0.4cm}

Now, $J_n'=(J(G)^{(k)}:x_1x_2\ldots x_n)$ which is equal to $J(G)^{(k-2)}$ by \cite[Lemma 3.4]{s5}. Thus, by induction hypothesis we conclude that$${\rm reg}(S/J_n')\leq (k-3){\rm deg}(J(G))+n-2.$$ Therefore, using the claim repeatedly, implies that
\begin{align*}
& {\rm reg}(S/J_1')\leq \max\{(k-1){\rm deg}(J(G))+n-3, {\rm reg}_S(S/J_n')+n-1\}\\
& \leq \max\{(k-1){\rm deg}(J(G))+n-3, (k-3){\rm deg}(J(G))+n-2+n-1\}.
\end{align*}
As $G\in \mathcal{H}$, the assumptions imply that $G$ has minimal vertex cover with cardinality at least $n/2$. This means that $2{\rm deg}(J(G))\geq n$. Thus, the above inequalities imply that$${\rm reg}(S/J_1')\leq (k-1){\rm deg}(J(G))+n-3.$$This completes the proof of the theorem.
\end{proof}

The following example from \cite{gv} shows that not every graph satisfies the condition (ii) of Theorem \ref{reg}. However, we will see in Theorems \ref{bip}, \ref{unmixed} and \ref{claw} that the condition (ii) of Theorem \ref{reg} is satisfied by bipartite graphs, unmixed graphs and claw--free graphs.

\begin{exmp}
For every pair of integers $n\geq 3$ and $s\geq2$, let $G_{n,s}$ be the graph obtained by attaching $s$ pendant edges at each vertex of $K_n$. The graph $G_{3,3}$ is shown below. It is easy to check that the largest minimal vertex cover of $G_{n,s}$ has $n+s-1<\frac{|V(G_{n,s})|}{2}$ vertices (note that every vertex cover of $G$ contains at least $n-1$ vertices of $V(K_n)$).
\end{exmp}
\unitlength 1mm 
\linethickness{0.8pt}
\ifx\plotpoint\undefined\newsavebox{\plotpoint}\fi 
\begin{picture}(30,70)(35,-20) \label{fig1}
\put(81,10){\line(1,0){40}}
\put(81,10){\line(1,1){20}}
\put(101,30){\line(1,-1){20}}
\put(71,5){\line(2,1){10}}
\put(71,10){\line(2,0){10}}
\put(71,15){\line(2,-1){10}}

\put(101,30){\circle*{1}}
\put(81,10){\circle*{1}}
\put(121,10){\circle*{1}}

\put(71,5){\circle*{1}}
\put(71,10){\circle*{1}}
\put(71,15){\circle*{1}}

\put(131,5){\line(-2,1){10}}
\put(131,10){\line(-2,0){10}}
\put(131,15){\line(-2,-1){10}}

\put(131,5){\circle*{1}}
\put(131,10){\circle*{1}}
\put(131,15){\circle*{1}}

\put(106,40){\line(-1,-2){5}}
\put(101,40){\line(0,-2){10}}
\put(96,40){\line(1,-2){5}}

\put(106,40){\circle*{1}}
\put(101,40){\circle*{1}}
\put(96,40){\circle*{1}}

\put(78,-8){{\bf Figure 1.} The graph $G_{3,3}$}
\end{picture}

As we mentioned in the introduction, Hang and Trung \cite{ht1} recently proved the inequality ${\rm reg}(J(G)^k)\leq (k-1){\rm deg}(J(G))+|V(G)|-1$, for every bipartite graph $G$ and every integer $k\geq |V(G)|+2$. The following Theorem shows that this inequality is indeed true for every positive integer $k$.

\begin{thm} \label{bip}
Let $G$ be a bipartite graph. Then for every integer $k\geq 1$, we have$$k{\rm deg}(J(G))\leq {\rm reg}(J(G)^k)\leq (k-1){\rm deg}(J(G))+|V(G)|-1.$$
\end{thm}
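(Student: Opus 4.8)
The plan is to derive Theorem \ref{bip} as a special case of Theorem \ref{reg}, for which it suffices to verify that the class $\mathcal{H}$ of all bipartite graphs satisfies the two closure/combinatorial conditions, together with the observation that for bipartite graphs ordinary and symbolic powers coincide. First I would record the last point: by \cite[Corollary 2.6]{grv}, for a bipartite graph $G$ one has $J(G)^k = J(G)^{(k)}$ for every integer $k\geq 1$, so the inequalities for ${\rm reg}(J(G)^k)$ are literally the inequalities for ${\rm reg}(J(G)^{(k)})$ supplied by Theorem \ref{reg}. Hence the whole proof reduces to checking the hypotheses of that theorem for the family of bipartite graphs.

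For condition (i), I would argue that the class of bipartite graphs is closed under the operation $G \mapsto G\setminus N_G[x]$. This is essentially immediate: any induced subgraph of a bipartite graph is bipartite, since deleting vertices cannot create an odd cycle. Concretely, if $V(G)=A\cup B$ is a bipartition, then $V(G\setminus N_G[x])=(A\setminus N_G[x])\cup(B\setminus N_G[x])$ is a bipartition of the induced subgraph, so $G\setminus N_G[x]$ is again bipartite and lies in $\mathcal{H}$.

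For condition (ii), I would show that a bipartite graph $G$ with no isolated vertex admits a minimal vertex cover of cardinality at least $\frac{|V(G)|}{2}$. The clean way to see this is via K\H{o}nig's theorem: in a bipartite graph the minimum vertex cover number equals the maximum matching number $\nu(G)$, and since $G$ has no isolated vertices the parts of the bipartition satisfy $|A|,|B|\leq \nu(G)$ only in degenerate cases, so one must be a little careful. The more robust approach is to exhibit a specific large minimal vertex cover directly: take a bipartition $V(G)=A\cup B$ and consider the two vertex covers obtained as $C_A=A$ and $C_B=B$ (each is a vertex cover because every edge meets both parts), refine whichever of them is larger to a \emph{minimal} vertex cover, and note that $\max\{|A|,|B|\}\geq \frac{|V(G)|}{2}$. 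The subtlety, and the step I expect to be the main obstacle, is that refining a vertex cover to a minimal one can \emph{decrease} its cardinality, so starting from the larger of $A,B$ does not by itself guarantee the bound after minimalization; instead one wants a minimal vertex cover whose complement (a maximal independent set) has size at most $\frac{|V(G)|}{2}$, equivalently a maximal independent set of size at most half. Here I would invoke that $G$ has no isolated vertices so that every vertex has a neighbor, which forces the independence number to interact with the bipartition structure in the desired way.

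To make this last point rigorous I would lean on the matching-theoretic reformulation: a minimal vertex cover has size $|V(G)|-\alpha(G)$ where $\alpha(G)=i(G)$ is the independence number realized by \emph{some} maximal independent set, and the existence of a minimal vertex cover of size $\geq \frac{|V(G)|}{2}$ is equivalent to the existence of a maximal independent set of size $\leq \frac{|V(G)|}{2}$. Since $G$ has no isolated vertices, it has a perfect matching on a saturated subset, and one can use a matching $M$ to build a maximal independent set selecting at most one endpoint of each edge of $M$ together with the unmatched vertices, arranging that the chosen set has at most half the vertices; combined with König's theorem this yields the minimal vertex cover of the required size. Once both conditions (i) and (ii) are verified, Theorem \ref{reg} applies to $\mathcal{H}=\{\text{bipartite graphs}\}$ and gives $k\,{\rm deg}(J(G))\leq {\rm reg}(J(G)^{(k)})\leq (k-1){\rm deg}(J(G))+|V(G)|-1$; substituting $J(G)^{(k)}=J(G)^k$ completes the proof of Theorem \ref{bip}.
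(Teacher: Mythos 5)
Your overall strategy is exactly the paper's: verify conditions (i) and (ii) of Theorem \ref{reg} for the class of bipartite graphs and invoke \cite[Corollary 2.6]{grv} to replace $J(G)^{(k)}$ by $J(G)^k$. Condition (i) is handled correctly. The problem is your treatment of condition (ii), where you correctly identify the subtlety (a vertex cover refined to a minimal one may shrink) but then fail to resolve it. The matching-theoretic argument you sketch does not work: starting from a maximum matching $M$ and extending the set of $M$-unmatched vertices to a maximal independent set produces a set of size at most $|V(G)|-\nu(G)$, which is $\leq \frac{|V(G)|}{2}$ only when $G$ has a perfect matching. For $K_{1,n}$, for instance, your construction yields the maximal independent set consisting of all $n$ leaves, which is far larger than half the vertex set; K\H{o}nig's theorem controls the \emph{minimum} vertex cover and gives no lower bound on the size of the \emph{largest minimal} vertex cover. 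Also, your identity ``a minimal vertex cover has size $|V(G)|-\alpha(G)$'' is false in general, since distinct minimal vertex covers can have distinct cardinalities.

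The repair is much simpler and is what the paper does: if $V(G)=A\cup B$ is a bipartition with $|A|\geq|B|$ and $G$ has no isolated vertex, then $A$ is \emph{already} a minimal vertex cover --- no refinement is needed. Indeed, $B=V(G)\setminus A$ is independent, and it is a \emph{maximal} independent set because every vertex of $A$ has a neighbour, necessarily lying in $B$; hence $B\cup\{a\}$ is not independent for any $a\in A$. Since a set is a minimal vertex cover exactly when its complement is a maximal independent set, $A$ is a minimal vertex cover of cardinality $\max\{|A|,|B|\}\geq\frac{|V(G)|}{2}$. With this substitution your argument becomes complete and coincides with the paper's proof.
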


\begin{proof}
Let $\mathcal{H}$ be the family of all bipartite graphs. It is clear that for every graph $G\in \mathcal{H}$ and every vertex $x\in V(G)$, we have $G\setminus N_G[x]\in \mathcal{H}$.

Let $G$ be a bipartite graph without isolated vertices. Assume that $V(G)=A\cup B$ is a bipartition for the vertex set of $G$. Without loss of generality, we may suppose that $|A|\geq |B|$. Then $A$ is a minimal vertex cover of $G$ with cardinality at least $\frac{|V(G)|}{2}$. On the other hand, it follows from \cite[Corollary 2.6]{grv} that for every integer $k\geq 1$ we have $J(G)^k=J(G)^{(k)}$. The desired inequalities now follow from Theorem \ref{reg}.
\end{proof}

The following corollary shows that the inequalities of Theorem \ref{bip} are sharp.

\begin{cor} \label{eq1}
For the complete bipartite graph $K_{1,n}$, we have$${\rm reg}(J(K_{1,n})^k)=kn,$$for every integer $k\geq 1$.
\end{cor}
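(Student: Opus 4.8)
The plan is to deduce this directly from Theorem \ref{bip}, using the crucial numerical coincidence that for the star $K_{1,n}$ the two bounds in that theorem collapse to a single value. So the entire content of the corollary is the observation that $|V(K_{1,n})|-1$ equals ${\rm deg}(J(K_{1,n}))$, which squeezes the regularity between equal lower and upper bounds.

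First I would record the combinatorial data. Write $x$ for the center of $K_{1,n}$ and $y_1,\ldots,y_n$ for its leaves, so that the edges are exactly $\{x,y_i\}$ for $1\leq i\leq n$. Any vertex cover either contains $x$, in which case $\{x\}$ already covers every edge, or omits $x$, in which case it must contain all of $y_1,\ldots,y_n$. Hence the only minimal vertex covers are $\{x\}$ and $\{y_1,\ldots,y_n\}$, giving $J(K_{1,n})=(x,y_1\cdots y_n)$. In particular ${\rm deg}(J(K_{1,n}))=n$, while $|V(K_{1,n})|=n+1$.

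Next, since $K_{1,n}$ is bipartite, Theorem \ref{bip} applies and yields, for every integer $k\geq 1$,
$$kn=k\,{\rm deg}(J(K_{1,n}))\leq {\rm reg}(J(K_{1,n})^k)\leq (k-1){\rm deg}(J(K_{1,n}))+|V(K_{1,n})|-1=(k-1)n+n=kn.$$
The extreme terms agree, forcing ${\rm reg}(J(K_{1,n})^k)=kn$, which is the assertion.

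There is essentially no obstacle here; the only point needing (minor) care is the determination of the minimal vertex covers, and hence the equality ${\rm deg}(J(K_{1,n}))=n$. As an independent consistency check one could compute the regularity by hand: since $x$ and $w:=y_1\cdots y_n$ involve disjoint sets of variables, they form a regular sequence, so $J(K_{1,n})^k=(x,w)^k$ is a power of a codimension-two complete intersection. Its quotient is Cohen--Macaulay, so $(x,w)^k$ has projective dimension one and is resolved by a length-one complex whose free modules are $\bigoplus_{i=0}^{k}S(-(k-i)-in)$ and $\bigoplus_{i=0}^{k-1}S(-(k-i)-(i+1)n)$. The top generator degree is $kn$ (at $i=k$) and the top first-syzygy degree is $kn+1$ (at $i=k-1$); reading off $\max\{kn,(kn+1)-1\}=kn$ recovers the same value, confirming the result.
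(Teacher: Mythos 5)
Your proposal is correct and is exactly the paper's argument: the paper's proof is the one-line observation that ${\rm deg}(J(K_{1,n}))=n$, so the two bounds of Theorem \ref{bip} both equal $kn$ and squeeze the regularity. Your extra verification of the minimal vertex covers and the complete-intersection consistency check are fine but not needed beyond that observation.
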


\begin{proof}
The assertion follows from Theorem \ref{bip} by noticing that ${\rm deg}(J(K_{1,n}))=n$.
\end{proof}
Let $G$ be a bipartite graph. In \cite[Theorem 4.3]{s3}, we proved that $k{\rm deg}(J(G))+{\rm reg}(J(G))-1$ is an upper bound for the regularity of $J(G)^k$. In the same paper, \cite[Remark 4.4]{s3}, we mentioned that this bound is not probably the best one. In fact the upper bound of Theorem \ref{bip} is an improvement for the bound given by \cite[Theorem 4.3]{s3}. To see this, assume that $G$ has no isolated vertex and suppose that $V(G)=A\cup B$ is a bipartition for the vertex set of $G$. Without loss of generality, assume that $|A|\geq |B|$. As $A$ is a minimal vertex cover of $G$, we conclude that $|A|\leq {\rm deg}(J(G))$ and hence, $|A|\leq {\rm reg}(J(G))$. Thus,

\begin{align*}
(k-1){\rm deg}(J(G)) & +|V(G)|-1\leq (k-1){\rm deg}(J(G))+2|A|-1\\ & \leq k{\rm deg}(J(G))+{\rm reg}(J(G))-1.
\end{align*}

The second class of graphs which we consider is the class of unmixed graphs.

\begin{thm} \label{unmixed}
Let $G$ be an unmixed graph. Then for every integer $k\geq 1$, we have$$k{\rm deg}(J(G))\leq {\rm reg}(J(G)^{(k)})\leq (k-1){\rm deg}(J(G))+|V(G)|-1.$$
\end{thm}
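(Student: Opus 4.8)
The plan is to invoke Theorem \ref{reg}, so the entire task reduces to verifying that the class $\mathcal{H}$ of all unmixed graphs satisfies conditions (i) and (ii) of that theorem. Once both conditions are checked, the chain of inequalities follows immediately from Theorem \ref{reg} applied to $G\in\mathcal{H}$, exactly as in the proof of Theorem \ref{bip}. So I would structure the proof as: first reduce to checking (i) and (ii), then dispatch each.

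For condition (ii), this is essentially trivial in the unmixed case. If $G$ is unmixed and has no isolated vertex, then every minimal vertex cover has the same cardinality, say $c={\rm deg}(J(G))$. Since for any minimal vertex cover $C$ the complement $V(G)\setminus C$ is a maximal independent set, and the vertex set is partitioned as $C\sqcup (V(G)\setminus C)$, I need $c\geq |V(G)|/2$, i.e.\ $c\geq |V(G)\setminus C|$. Equivalently, I must show the independence number $i(G)$ does not exceed the common size of the minimal vertex covers. Here I expect to need the result the author attributes to \cite{gv}: the paper explicitly states ``It follows from \cite{gv} that the class of unmixed graphs also satisfies the assumption of Theorem \ref{reg}.'' The key fact from \cite{gv} is presumably that an unmixed graph without isolated vertices has a minimal vertex cover of size at least half the vertices (this is closely tied to the fact that in an unmixed graph the covering number and the maximum independent set satisfy a balance condition). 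I would cite \cite{gv} for precisely this statement.

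For condition (i), the main point to verify is that the class of unmixed graphs is closed under passing from $G$ to $G\setminus N_G[x]$ for any vertex $x$. This is the step I expect to require the most care, since being unmixed is not obviously hereditary under arbitrary vertex deletion, but the operation $G\mapsto G\setminus N_G[x]$ (deleting a closed neighborhood) is special. The guiding idea is a correspondence between minimal vertex covers of $G\setminus N_G[x]$ and those of $G$: as already observed in the proof of Theorem \ref{reg}, if $C$ is a minimal vertex cover of $G\setminus N_G[x]$ then $C\cup N_G(x)$ is a minimal vertex cover of $G$, and this correspondence shifts cardinalities by the constant $|N_G(x)|={\rm deg}_G(x)$. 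If this map is in fact a bijection onto the minimal vertex covers of $G$ that contain $N_G(x)$ and avoid $x$, then all minimal vertex covers of $G\setminus N_G[x]$ differ in size from such covers of $G$ by the same constant, and hence the unmixedness of $G$ (all its minimal vertex covers having equal size) forces all minimal vertex covers of $G\setminus N_G[x]$ to have equal size. Again, this is exactly the content the author draws from \cite{gv}, so I would cite that reference for the closure property.

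Thus the actual proof is short: I would write that by \cite{gv} the family $\mathcal{H}$ of unmixed graphs satisfies conditions (i) and (ii) of Theorem \ref{reg}, and therefore the claimed inequalities follow directly from Theorem \ref{reg}. The genuine mathematical content lives in \cite{gv}, and the hardest conceptual step is condition (i)—verifying that deleting a closed neighborhood preserves unmixedness via the vertex-cover correspondence—rather than any new computation internal to this paper.
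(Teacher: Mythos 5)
Your proposal is correct and matches the paper's proof: the paper likewise verifies condition (i) via the correspondence $W\mapsto W\cup N_G(x)$ between minimal vertex covers of $G\setminus N_G[x]$ and those of $G$, and cites \cite{gv} (see also \cite[Theorem 0.1]{crt}) for condition (ii). The only cosmetic difference is that you frame (i) as needing a bijection, whereas only the forward direction (every minimal vertex cover $C$ of $G\setminus N_G[x]$ yields a minimal vertex cover $C\cup N_G(x)$ of $G$ of size $|C|+{\rm deg}_G(x)$) is actually required, and the paper states this correspondence directly rather than attributing it to \cite{gv}.
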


\begin{proof}
Let $\mathcal{H}$ be the family of all unmixed graphs. Assume that $G$ is an unmixed graph and $x$ is an arbitrary vertex of $G$. Then a subset $W\subseteq V(G\setminus N_G[x])$ is a minimal vertex cover of $G\setminus N_G[x]$ if and only if $W\cup N_G(x)$ is a minimal vertex cover of $G$. Hence $G\setminus N_G[x]\in \mathcal{H}$. On the other hand, we know from \cite{gv} (see also \cite[Theorem 0.1]{crt}) that if $G$ is an unmixed graph without isolated vertices, then it has a minimal vertex cover with cardinality at least $\frac{|V(G)|}{2}$. The desired inequalities now follow from Theorem \ref{reg}.
\end{proof}

The last class of graphs which we study is the family of claw--free graphs.

\begin{thm} \label{claw}
Let $G$ be a claw--free graph. Then for every integer $k\geq 1$, we have$$k{\rm deg}(J(G))\leq {\rm reg}(J(G)^{(k)})\leq (k-1){\rm deg}(J(G))+|V(G)|-1.$$
\end{thm}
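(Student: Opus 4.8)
The plan is to argue exactly as in the proofs of Theorems~\ref{bip} and \ref{unmixed}: I will take $\mathcal{H}$ to be the family of all claw--free graphs and verify that it satisfies the two hypotheses (i) and (ii) of Theorem~\ref{reg}. Once this is done, the asserted inequalities follow immediately from that theorem, with nothing further to compute.

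Condition (i) is the routine part. For any vertex $x\in V(G)$, the graph $G\setminus N_G[x]$ is by definition the subgraph of $G$ induced on the vertex set $V(G)\setminus N_G[x]$. Since an induced claw inside an induced subgraph is already an induced claw of the ambient graph, every induced subgraph of a claw--free graph is again claw--free. Hence $G\setminus N_G[x]\in\mathcal{H}$, and (i) holds. (Note that this subgraph may acquire isolated vertices, but that is harmless, as condition (i) imposes no restriction on isolated vertices.)

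The hard part will be condition (ii). Let $G$ be a claw--free graph with no isolated vertex and write $n=|V(G)|$. Recall that the minimal vertex covers of $G$ are precisely the complements of the maximal (under inclusion) independent sets. Thus exhibiting a minimal vertex cover of cardinality at least $n/2$ is equivalent to exhibiting a maximal independent set of cardinality at most $n/2$; that is, I must bound the \emph{minimum} size of a maximal independent set (the independent domination number) by $n/2$. The cleanest route I see combines two classical facts. First, by the theorem of Allan and Laskar, for a claw--free graph the domination number $\gamma(G)$ equals the minimum size of a maximal independent set; the point at which claw--freeness enters is that, for every vertex $x$, the neighborhood $N_G(x)$ contains no independent set of size three (otherwise $x$ together with such a triple would induce a claw). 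Second, by Ore's theorem, any graph without isolated vertices satisfies $\gamma(G)\leq n/2$. Putting these together, $G$ admits a maximal independent set of size at most $\gamma(G)\leq n/2$, whose complement is the desired minimal vertex cover of size at least $n/2$.

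With both hypotheses of Theorem~\ref{reg} verified for $\mathcal{H}$, that theorem applies to every claw--free $G$ and yields the stated bounds. I expect the only genuinely delicate point to be condition (ii): one must invoke the \emph{equality} of the domination number and the independent domination number for claw--free graphs, and not merely a bound on the independence number $i(G)$, which would go in the wrong direction. If a self--contained argument is preferred, condition (ii) can be recovered by a direct greedy construction that repeatedly adds a vertex to an independent set while deleting its closed neighborhood, using at each step that the removed neighborhood contains no independent triple; the claw--free hypothesis is exactly what makes such a greedy independent dominating set small enough.
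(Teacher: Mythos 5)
Your proposal is correct and follows the paper's overall strategy (apply Theorem~\ref{reg} to the family $\mathcal{H}$ of claw--free graphs, with condition (i) being the routine observation that induced subgraphs of claw--free graphs are claw--free), but it establishes the key condition (ii) by a genuinely different route. The paper proves directly, by induction on $|V(G)|$, that a claw--free graph without isolated vertices has a minimal vertex cover of cardinality at least $|V(G)|/2$: it picks a non-leaf vertex $w$, deletes $N_G[w]$, bounds the number of resulting isolated vertices by $|N_G(w)|-1$ using claw--freeness, and extends a cover of the remainder by $N_G(w)$. You instead pass to the complementary statement about maximal independent sets and invoke two classical theorems: Allan--Laskar (for claw--free graphs the domination number equals the minimum cardinality of a maximal independent set) and Ore (the domination number of a graph without isolated vertices is at most $|V(G)|/2$). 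Your combination is valid and considerably shorter, at the price of importing nontrivial external results where the paper is self--contained; you are also right that one must control the \emph{minimum} maximal independent set rather than the independence number, which the paper's notation $i(G)$ denotes and which would indeed go the wrong way. One caveat: your closing sketch of a ``direct greedy construction'' is not by itself a complete substitute --- it is not evident that an arbitrary greedy independent dominating set has size at most $|V(G)|/2$ --- but since you present it only as an optional alternative to the Allan--Laskar/Ore argument, the proof as a whole stands.
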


\begin{proof}
Let $\mathcal{H}$ be the family of all claw--free graphs. It is clear that for every graph $G\in \mathcal{H}$ and every vertex $x\in V(G)$, we have $G\setminus N_G[x]\in \mathcal{H}$. The assertion now follows from Theorem \ref{reg} together with the following claim.

{\bf Claim.} If $G$ is a claw--free graph which has no isolated vertex, then it admits a minimal vertex cover with cardinality at least $\frac{|V(G)|}{2}$.

{\it Proof of the Claim.} We use induction on $|V(G)|$. There is nothing to prove for $|V(G)|=2, 3$. Therefore, suppose that $|V(G)|\geq 4$. Without loss of generality, we may assume that $G$ is a connected graph. Let $W$ be the subset of vertices of $G$ with degree at least two. Assume that there is a vertex $x\in W$ which is adjacent to at least two leaves, say $y,z$. As $G$ is connected and claw--free, we conclude that $G$ has no other vertex, i.e., $V(G)=\{x,y,z\}$, which contradicts our assumption that $|V(G)|\geq 4$. Thus, every vertex in $W$ is adjacent to at most one leaf. If every vertex in $W$ is adjacent to exactly one leaf, then $|V(G)|=2|W|$ and $W$ is a minimal vertex cover of $G$. Hence, the claim follows in this case.

Therefore, assume that there is a vertex $v\in W$ which is adjacent to no leaf in $G$. Let $w$ be a neighborhood of $v$. As $w$ is not a leaf, it follows that  $w\in W$. Set $H=G\setminus N_G[w]$ and suppose that $U$ is the set of isolated vertices of $H$. Then every vertex in $U$ is adjacent to a vertex in $N_G(w)$. If there are to vertices $w_1, w_2\in U$ which are adjacent to a vertex $w_0\in N_G(w)$, then the vertices $w, w_0, w_1, w_2$ form a claw which is a contradiction. Thus, every vertex in $N_G(w)$ is adjacent to at most one vertex in $U$. This shows that $|U|\leq |N_G(w)|$. However, we prove the following stronger inequality. \[
\begin{array}{rl}
|U|\leq |N_G(w)|-1
\end{array} \tag{5} \label{5}
\]
Indeed, using the above argument, the inequality (\ref{5}) is obvious, if there is a vertex in $U$ which is adjacent to at at least two vertices in $N_G(w)$. Hence, suppose that every vertex in $U$ is adjacent to exactly one vertex in $N_G(w)$. Thus, the vertices of $W'$ have degree one in $G$. This shows that no vertex in $U$ is adjacent to $v$. Since $v\in N_G(w)$, again the above argument implies that $|U|\leq |N_G(w)|-1$.

Note that $H\setminus U$ is a claw--free graph which has no isolated vertex. It follows from the induction hypothesis that $H\setminus U$ has a minimal vertex cover $C$ with$$|C|\geq \frac{|V(H\setminus U)|}{2}=\frac{|V(H)|-|U|}{2}.$$Then $C\cup N_G(w)$ is a minimal vertex cover of $G$ and
\begin{align*}
|C\cup N_G(w)| & =|C|+|N_G(w)|\geq \frac{|V(H)|-|U|}{2}+ |N_G(w)|\\ & \geq \frac{|V(H)|+|N_G(w)|+1}{2}=\frac{|V(G)|}{2},
\end{align*}
where the last inequality follows from the inequality (\ref{5}).
\end{proof}

The following corollary shows that the inequalities of Theorem \ref{claw} are sharp.

\begin{cor} \label{eq2}
Assume that $H$ is a graph with $i(G)\leq 2$. Let $G$ be the graph obtained from $H$ by adding a new vertex $y$ and connecting $y$ to every vertex of $H$. Then for every integer $k\geq 1$, we have$$k{\rm deg}(J(G))={\rm reg}(J(G)^{(k)})=(k-1){\rm deg}(J(G))+|V(G)|-1.$$In particular,$${\rm reg}(J(K_n)^{(k)})=k(n-1),$$for every integer $n\geq 2$.
\end{cor}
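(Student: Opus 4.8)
The plan is to prove Corollary~\ref{eq2} by first establishing that $G$ is claw--free so that Theorem~\ref{claw} applies, and then separately pinning down both ${\rm deg}(J(G))$ and ${\rm reg}(J(G)^{(k)})$ to show that the two bounds in Theorem~\ref{claw} coincide. The crucial structural observation is that $y$ is a \emph{universal} vertex of $G$: it is adjacent to every other vertex. This has two consequences. First, since $G = H * \{y\}$ is the join of $H$ with a single vertex, any induced $K_{1,3}$ in $G$ would force an independent set of size at least $2$ inside $N_G(y)=V(H)$ together with structural constraints; more precisely, I would argue that claw--freeness of $G$ is equivalent to the independence number condition $i(H)\leq 2$, because an induced claw centered at a vertex $x$ requires three pairwise non-adjacent neighbors of $x$, and the hypothesis $i(H)\le 2$ limits how large an independent set can be among the vertices.

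Second, I would compute ${\rm deg}(J(G))$, the size of the largest minimal vertex cover of $G$. Since $y$ is universal, any independent set of $G$ must avoid $y$ (as $y$ is adjacent to everything) and hence lies entirely in $H$; therefore the independence number satisfies $i(G)=i(H)\leq 2$. The maximum independent set of $G$ has size $i(G)$, and the largest minimal vertex cover has size $|V(G)| - (\text{smallest maximal independent set})$; I would instead use the cleaner identity that the \emph{largest} minimal vertex cover is the complement of the \emph{smallest} maximal independent set. Here the key point is that $\{y\}$ together with any single vertex forms structure, but more directly: a maximal independent set of $G$ not containing $y$ is a maximal independent set of $H$, which has size at most $i(H)\le 2$, so $V(G)\setminus S$ with $|S|\le 2$ gives a minimal vertex cover of size at least $|V(G)|-2$. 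Combined with $2\,{\rm deg}(J(G))\ge |V(G)|$ forcing room, I expect ${\rm deg}(J(G)) = |V(G)|-1$, which is exactly the value that makes the lower and upper bounds of Theorem~\ref{claw} agree.

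Indeed, substituting ${\rm deg}(J(G))=|V(G)|-1$ into the two bounds of Theorem~\ref{claw} gives lower bound $k(|V(G)|-1)$ and upper bound $(k-1)(|V(G)|-1)+|V(G)|-1 = k(|V(G)|-1)$, so the squeeze is immediate once the degree is computed. Thus the proof reduces to two verifications: (a) $G$ is claw--free, and (b) ${\rm deg}(J(G))=|V(G)|-1$. The main obstacle I anticipate is verifying (b) carefully, namely showing that the largest minimal vertex cover really has size $|V(G)|-1$ rather than something smaller; this hinges on exhibiting a minimal vertex cover of size $|V(G)|-1$, which corresponds to a maximal independent set of size exactly $1$. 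Because $y$ is universal, $\{y\}$ is itself a maximal independent set of $G$ (no vertex can be added to it, since $y$ is adjacent to all of $V(H)$), and therefore $V(G)\setminus\{y\}=V(H)$ is a minimal vertex cover of size $|V(G)|-1$. This gives the needed lower bound on ${\rm deg}(J(G))$, while the trivial upper bound ${\rm deg}(J(G))\le |V(G)|-1$ holds since no minimal vertex cover can be all of $V(G)$.

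For the final claim about $K_n$, I would simply specialize: taking $H=K_{n-1}$, we have $i(H)=1\le 2$, and the join $H*\{y\}$ is exactly $K_n$, with $|V(K_n)|=n$. The general formula then yields ${\rm reg}(J(K_n)^{(k)})=k(n-1)$, completing the corollary.
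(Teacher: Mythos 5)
Your proposal is correct and takes essentially the same route as the paper: verify that $i(G)\leq 2$ (hence $G$ is claw--free), observe that $\{y\}$ is a maximal independent set so that $V(H)$ is a minimal vertex cover and ${\rm deg}(J(G))=|V(G)|-1$, and note that this value makes the lower and upper bounds of Theorem \ref{claw} coincide. The paper's proof is simply a terser version of this same argument.
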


\begin{proof}
It is obvious from the construction of $G$ that $i(G)\leq 2$. Hence, $G$ is a claw--free graph. On the other hand $V(H)$ is a minimal vertex cover of $G$. Thus, ${\rm deg}(J(G))=|V(H)|=|V(G)|-1$. The assertions now follow from Theorem \ref{claw}.
\end{proof}





\end{document}